\documentclass[11pt]{article}

\usepackage[utf8]{inputenc}
\usepackage[T1]{fontenc}
\usepackage{mathpazo}
\usepackage{needspace}

\RequirePackage[english]{babel}

\RequirePackage[a4paper,margin=2.5cm]{geometry}
\RequirePackage{parskip}

\newcounter{case}[section]
\renewcommand{\thecase}{\arabic{case}}

\newcommand{\caseheading}{%
  \refstepcounter{case}%
  \par\medskip\noindent\textbf{Case \thecase. }\ignorespaces
}

\newcommand{\affiliation}{\footnote}
\newcommand{\comment}[1]{}
\makeatletter
\def\@fnsymbol#1{\ensuremath{\ifcase#1\or *\or \dagger\or \ddagger\or \mathsection\or \|\or **\or \dagger\dagger \or \ddagger\ddagger \else\@ctrerr\fi}}
\makeatother

\usepackage{xcolor}
\definecolor{cblue}{RGB}{0,70,140}
\definecolor{cgreen}{RGB}{100,140,0}
\definecolor{cred}{RGB}{190,10,50}

\usepackage[shortlabels]{enumitem}
\setlist{itemsep=0ex,topsep=0ex,parsep=0.4ex}

\usepackage{amsmath,amsfonts,amssymb,amsthm,mathtools,thmtools,thm-restate,bbm,centernot}

\usepackage[hyphens]{url} 
\usepackage[hidelinks,colorlinks,pagebackref]{hyperref} 
\hypersetup{citecolor=cgreen,linkcolor=cblue,urlcolor=cblue}
\usepackage[capitalise,nameinlink,noabbrev,compress]{cleveref} 

\renewcommand*{\backref}[1]{}

\renewcommand*{\backrefalt}[4]{%
    \ifcase #1\relax
        Not cited.%
    \or
        #2%
    \else
        #2%
    \fi
}

\theoremstyle{plain}
\newtheorem{theorem}{Theorem}[section]
\newtheorem{lemma}[theorem]{Lemma}

\newtheorem{proposition}[theorem]{Proposition}

\newtheorem{corollary}[theorem]{Corollary}

\theoremstyle{definition}

\newtheorem{claim}[theorem]{Claim}

\makeatletter
\renewenvironment{proof}[1][\proofname]
{\par\pushQED{\qed}
	\normalfont\topsep6\p@\@plus6\p@\relax\trivlist
	\item[\hskip\labelsep\bfseries#1\@addpunct{.}]
	\ignorespaces}
{\popQED\endtrivlist\@endpefalse}
\makeatother

\newcommand{\ex}{{\rm ex}}

\title{On the Tur\'an Density of $C_{10}$ in the Hypercube}
\author{Marko Peji\'c \affiliation{Karlsruhe Institute of Technology, Germany (\textsf{\href{mailto:marko.pejic@student.kit.edu}{marko.pejic@student.kit.edu}})}}

\begin{document}
\maketitle
\hypersetup{linkcolor=cred}

\newcommand{\etalchar}[1]{$^{#1}$}

\begin{abstract}

The $n$-dimensional hypercube $Q_n$ is the graph with vertex set $\{0,1\}^n$ in which two vertices are adjacent if they differ in exactly one coordinate. 
For a graph $H$, let $\ex(Q_n,H)$ be the maximum number of edges in an $H$-free subgraph of $Q_n$. 
The \emph{hypercube Tur\'an density} of $H$ is defined by $\pi_{\square}(H)=\lim_{n\rightarrow\infty}\ex(Q_n,H)/|E(Q_n)|$. 
In this note, we prove
\[
    \frac{1}{8} \leq \pi_{\square}(C_{10}) \leq 0.36577.
\]
For the upper bound, we prove $\pi_{\square}(C_{10}) \leq \pi_{\square}(C_6)$, which, together with a result of Baber, gives the stated upper bound. 
For the lower bound, we prove that $\ex(Q_n,C_{10}) > |E(Q_n)|/8$ for every $n \geq 2$. 

\end{abstract}

\section{Introduction}

The $n$-dimensional hypercube $Q_n$ is the graph with vertex set $\{0,1\}^n$ in which two vertices are adjacent if they differ in exactly one coordinate. 
A copy of a graph $H$ in a graph $G$ is a subgraph of $G$ isomorphic to $H$. 
We say that $G$ is $H$-free if it contains no copy of $H$. 
For a graph $H$, let $\ex(Q_n, H)$ denote the maximum number of edges in a subgraph of $Q_n$ containing no copy of $H$. 
Let $\pi_\square(H) = \lim_{n \rightarrow \infty} \ex(Q_n, H)/|E(Q_n)|$ denote the hypercube Tur\'an density of $H$. 
We say that a graph $H$ has {\it zero Tur\'an density in the hypercube} if $\pi_\square(H)=0$, and {\it positive Tur\'an density in the hypercube} otherwise. 
In the following, we simply say Tur\'an density instead of Tur\'an density in the hypercube. 
A standard double counting argument shows that the sequence $\ex(Q_n, H)/|E(Q_n)|$ is non-increasing, so the limit exists and the given density notion is well-defined. 
Throughout the remainder of this note, we write $[n] = \{1, \ldots, n\}$ and $e(G) = |E(G)|$. 
For a vertex $x= (x_1, \ldots, x_n) \in \{0,1\}^n$, we call $x_i$ its
$i$-th coordinate. 
The direction of an edge $xy$ of $Q_n$ is the unique index $i\in[n]$
such that $x_i\neq y_i$, and the direction set of a subgraph of $Q_n$
is the set of directions of its edges.

A central question is to determine $\pi_\square(C_{2\ell})$, where $C_{2\ell}$ denotes the cycle of length $2\ell$.
It is well known that $\pi_\square(C_{4}) \geq 1/2$, while Baber~\cite{baber} proved the upper bound $\pi_\square(C_{4}) \leq 0.60318$. 
Further, Chung~\cite{chung} first proved $\pi_{\square}(C_6) \geq 1/4$, Conder~\cite{conder} improved this to $1/3$, and Baber~\cite{baber} proved the upper bound $\pi_{\square}(C_6) \leq 0.36577$. 
Using flag algebras, Balogh, Hu, Lidick\'y, and Liu~\cite{BHLL} obtained slightly weaker upper bounds for $C_4$ and $C_6$.
Moreover, $C_{2\ell}$ has zero Tur\'an density for every integer
$\ell \geq 2$ with $\ell \notin \{2,3,5\}$, see
Chung~\cite{chung}, F\"uredi and \"Ozkahya~\cite{FOe1,FOe2}, and
Conlon~\cite{conlon}, as well as Axenovich~\cite{axenovich} for an
overview of these results.
Hence, $C_{10}$ was the final even cycle for which it remained unknown whether $\pi_{\square}(C_{10})=0$.
Grebennikov and Marciano~\cite{GM1} settled this case by proving that
$\pi_{\square}(C_{10}) > 0.024$.\footnote{The published version gives the weaker lower bound $\pi_{\square}(C_{10}) > 0.024$, while the current arXiv version~\cite{GM2} improves this to $\pi_{\square}(C_{10}) > 0.036$.} 
Their construction is strongly inspired by a result of Ellis, Ivan, and Leader~\cite{EIL} for daisy-free hypergraphs.
Before the present note, the best upper bound on $\pi_{\square}(C_{10})$ known to us was $1/\sqrt{2} \approx 0.7071$, which follows from a result of Axenovich and Martin~\cite[Theorem~3.3]{AM}. 
In this note, we improve both the upper and lower bounds on $\pi_{\square}(C_{10})$. 
For the upper bound, we prove a relation between the Tur\'an densities of $C_6$ and $C_{10}$. 
For the lower bound, we prove a slightly stronger statement about the extremal number of $C_{10}$ in $Q_n$ and the proof is inspired by the construction of Grebennikov and Marciano~\cite{GM2}. 
That is, we shall also assign a vector $v_i$ to each $i \in [n]$ and select vertices according to basis conditions. 
However, we work over $\mathbb{F}_3$ rather than $\mathbb{F}_2$ and use a different condition for retaining edges, which makes the resulting graph $C_{10}$-free from begin with.

\begin{theorem}\label{main}
The hypercube Tur\'an densities of $C_6$ and $C_{10}$ satisfy
\[
    \pi_{\square}(C_{10}) \leq \pi_{\square}(C_6).
\]
\end{theorem}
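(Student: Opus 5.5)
The plan is a reduction: from a $C_{10}$-free subgraph $G\subseteq Q_n$ with $e(G)\ge(\pi_\square(C_6)+\varepsilon)e(Q_n)$, build a subgraph $G'$ of a slightly smaller cube with edge density still above $\pi_\square(C_6)+\varepsilon/2$, and such that every $C_6$ in $G'$ lifts to a $C_{10}$ in $G$. Since $\ex(Q_m,C_6)/e(Q_m)\to\pi_\square(C_6)$, this gives a contradiction for large $n$.

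The lifting idea comes from one observation. In a $6$-cycle $u_0u_1\cdots u_5$ of $Q_m$, replace two edges $u_iu_{i+1}$ by detours through an extra direction $j$, namely
\[
u_i,\ u_i+e_j,\ u_{i+1}+e_j,\ u_{i+1}.
\]
Each detour adds $2$ to the length, so two detours give length $10$.

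\textbf{Step 1 (the auxiliary graph).} Fix a direction $j$ and identify the two layers $\{x_j=0\}$ and $\{x_j=1\}$ with $Q_{n-1}$. For an edge $uv$ of $Q_{n-1}$, consider the four edges of the square it spans with direction $j$: the edge in layer $0$, the edge in layer $1$, and the two $j$-edges $u(u+e_j)$ and $v(v+e_j)$. Call $uv$ \emph{good} if the square contains at least three edges of $G$. Such a square always contains either the layer-$0$ edge, the layer-$1$ edge, or the full three-edge detour $u,u+e_j,v+e_j,v$ between some layer and the other. Let $G'_j$ consist of the good edges, each coloured by which of these configurations is present.

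\textbf{Step 2 (density).} Average over $j$, or over a random pair of directions if extra room is needed. Then compare $e(G'_j)$ with $e(G)$, using the fact that a square containing at most two $G$-edges contributes little. Concretely, I would show that either the good squares carry almost all of the density, or $G$ is locally sparse in a way that forces $e(G)$ below the threshold. If plain averaging is too lossy, I would instead take $G'$ to be the set of layer-$0$ edges together with those $j$-squares containing a $G$-detour. For this I would use a random choice of $j$ and of a sub-cube restriction, which preserves the expected density by the standard double counting behind the monotonicity of $\ex(Q_n,H)/e(Q_n)$.

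\textbf{Step 3 (lifting, the main obstacle).} Given a $C_6$ in $G'$, I must choose which edges to realise directly in one layer and which to realise as detours, so that the result is a genuine $10$-cycle in $G$. The danger is that a detour vertex $u+e_j$ might be reused, or that the two layers might mix so that the walk does not close up. Adjacent detours share a vertex $u_i+e_j$, so I would insist that the two detour edges are non-adjacent. The remaining edges must all lie in a single layer, and the two detours must go to the same other layer.

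To guarantee this I would use supersaturation for $C_6$. Density above $\pi_\square(C_6)+\varepsilon/2$ yields $\Omega(\cdot)$ many $6$-cycles, not just one. By a counting or pigeonhole argument over the colourings, one of these cycles has its colours arranged correctly, with the cases depending on where the detour edges sit. If this bookkeeping fails, the fallback is to shrink the construction to the sub-cube $\{x_j=0\}$ only. There, edges of $G'$ are layer-$0$ edges of $G$ whose squares also carry a $G$-detour, and these serve as both types simultaneously. A $C_6$ in this graph then lifts directly, by detouring through any two opposite edges. The work in that case moves back into the density estimate of Step 2.
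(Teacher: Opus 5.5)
There is a genuine gap in Step 2, and the rest of the argument depends on it. An edge of $G'_j$ is a square with directions $\{i,j\}$ that carries at least three edges of $G$. So $e(G'_j)/e(Q_{n-1})$ is the proportion of such squares, and you give no mechanism tying this proportion to $e(G)/e(Q_n)$.

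Density alone does not tie them. An average square carries $4\rho$ edges of $G$, and for $\rho$ a little above $\pi_\square(C_6)$ this is fully compatible with very few squares carrying three. Your fallback is worse: an edge there requires a whole $4$-cycle of $G$, so for $C_4$-free $G$ the fallback graph is empty. You cannot exclude that case without already knowing the theorem.

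Any proof of your dichotomy (``either the good squares carry the density, or $G$ is too sparse'') would therefore have to extract the density bound from $C_{10}$-freeness itself. That is the entire content of the theorem, and as written it is only asserted. Step 3 is also unresolved. Supersaturation in $G'$ gives many $C_6$'s, but no pigeonhole over colourings forces one whose colours admit a consistent lift into a $10$-cycle of $G$.

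The missing idea is to glue $C_6$'s of $G$ itself, rather than lifting $C_6$'s of an auxiliary graph. Suppose two $C_6$'s of $G$ share an edge $e$ of direction $i$, with direction sets $\{i,j,k\}$ and $\{i,\ell,m\}$ where $\{j,k\}\cap\{\ell,m\}=\varnothing$. Their $3$-dimensional subcubes meet only in $e$. Deleting $e$ from both leaves two internally disjoint paths of length five, i.e.\ a $C_{10}$.

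So for each edge, the pairs $\{j,k\}$ that occur form an intersecting family, of size at most $n-2$. Each $3$-subcube has exactly eight $C_6$'s through a given edge, so a $C_{10}$-free $G$ has at most $\frac43(n-2)e(G)=O(n^2 2^n)$ copies of $C_6$.

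Averaging over $d$-dimensional subcubes finishes the proof. Each $C_6$ lies in $\binom{n-3}{d-3}$ of the $\binom{n}{d}2^{n-d}$ subcubes, so only an $O(d^3 2^d/n)$ fraction of them contain a $C_6$ of $G$. This gives $e(G)/e(Q_n)\le \ex(Q_d,C_6)/e(Q_d)+O(d^3 2^d/n)$.

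Your instinct to use supersaturation was right, but apply it to $G$ directly. Density above $\pi_\square(C_6)+\varepsilon$ forces $\Omega(n^3 2^n)$ copies of $C_6$ in $G$. Hence some edge lies in $\Omega(n^2)$ of them, and hence two of those have disjoint extra direction pairs.
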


\begin{theorem}\label{main2}
For every integer $n \geq 2$,
\[
    \ex(Q_n,C_{10}) > \frac{1}{8} e(Q_n).
\]
\end{theorem}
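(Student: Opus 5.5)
The plan is an explicit algebraic construction followed by a density count and a structural argument that the retained graph has no $C_{10}$. Following the idea sketched in the introduction, I would fix a dimension $k$ (I expect $k=2$ to suffice) and assign to every direction $i\in[n]$ a nonzero vector $v_i\in\mathbb{F}_3^k$. I would take the $v_i$ to be spread as evenly as possible over a fixed set of representatives, for instance the four projective points of $\mathbb{F}_3^2$. Each vertex $x\in\{0,1\}^n$ gets the label
\[
  s(x)=\sum_{i\in[n]} x_i v_i \in \mathbb{F}_3^k .
\]
An edge $\{x,x+e_i\}$ of direction $i$, with $x_i=0$, is then retained exactly when the pair $(s(x),v_i)$ satisfies a prescribed \emph{basis condition}. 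A natural choice is to require that $s(x)$ and $v_i$ span $\mathbb{F}_3^k$ and that $s(x)$ lies in a designated coset determined by the line $\langle v_i\rangle$. The retained subgraph $G_n$ is then fully determined. A key feature is that moving along an edge of direction $i$ changes the label by $\pm v_i$ according to the orientation. Hence along any closed walk, each direction is used an even number of times with equally many up and down steps, so the signed sum of the $v_i$ is $0$.

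The first step is the edge count. For fixed $i$, the label $s(x)$ of the lower endpoint depends only on the coordinates outside $i$. Whenever some other direction $j$ has $v_j$ independent of $v_i$ (this is where $n\ge 2$ and the spreading of the $v_i$ enter), these labels are nearly equidistributed over $\mathbb{F}_3^k$. I would compute the exact number of $x$ with $x_i=0$ whose label lands in the allowed set by a character-sum argument over $\mathbb{F}_3^k$: the sum $\sum_x \omega^{\langle \chi, s(x)\rangle}$ factorizes as $\prod_j (1+\omega^{\langle\chi,v_j\rangle})$. Since $|1+\omega|=1$, the error terms have modulus at most $1$, while the main term is $2^{n-1}$ times the density of the allowed set. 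If the allowed set has density strictly above $1/8$ (for example $1/9$ of labels for each of two compatible conditions, or a suitable union), the main term dominates. Summing over $i$ then gives $e(G_n)>\tfrac18 e(Q_n)$ for every $n\ge 2$, and the small cases $n=2,3$ would be checked directly if the error bound is not yet strong enough there.

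The second step, which I expect to be the main obstacle, is showing that $G_n$ is $C_{10}$-free. A $10$-cycle in $Q_n$ uses each of its directions an even number of times, so its direction multiset has one of a few types, such as $2{+}2{+}2{+}2{+}2$, $4{+}2{+}2{+}2$, $4{+}4{+}2$, or $6{+}2{+}2$. For each type I would write the cycle as a cyclic word in the directions and follow the labels $s$ around it. The basis condition forces the lower endpoint of every retained edge of direction $i$ to lie outside $\langle v_i\rangle$ and in the prescribed coset. Since the label changes by $\pm v_j$ at each step, I would track the class of $s$ modulo $\langle v_i\rangle$ between two occurrences of direction $i$. The aim is to show that the second occurrence of some direction always starts from a label violating the condition. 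I would handle the type with five distinct directions first, as it is the generic case, by an averaging argument over the five pairs of matched steps. The degenerate types would then follow by the same bookkeeping with fewer cases. If the first choice of condition admits a $C_{10}$ in some type, I would adjust the coset rule (or pass to $k=3$) until the case analysis closes, while rechecking that the density from the first step stays above $1/8$.
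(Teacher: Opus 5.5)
There is a genuine gap, and it cannot be fixed by tuning the coset rule. For fixed $k$, any rule that decides an edge $\{x,x+e_i\}$ from the pair $(s(x),v_i)$ alone either produces a $C_{10}$ or has density tending to $0$, because many directions must share a vector.

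Let $D_v=\{i: v_i=v\}$ with $|D_v|\ge 5$. Pick $a,\dots,e\in D_v$ and a vertex $x_0$ vanishing on them. In the $5$-dimensional subcube at $x_0$ spanned by $a,\dots,e$, a vertex at level $j$ has label $s(x_0)+jv$. So all edges between levels $j$ and $j+1$ carry the same data $(s(x_0)+jv,v)$ and are kept or deleted together. For $j\in\{1,2,3\}$ this layer of $Q_5$ contains a $C_{10}$; for $j=1$ take the supports $a,ab,b,bc,c,cd,d,de,e,ea$, and for $j=2,3$ there are analogous cycles.

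Every edge of a direction in $D_v$ has the data of such a layer. Suppose its lower endpoint $x$ has $m\le |D_v|-1$ ones in $D_v$. Replace $x$ by a vertex $y$ that agrees with $x$ off $D_v$ and has $m'\equiv m\pmod 3$ ones in $D_v$, with $1\le m'\le |D_v|-2$; the label is unchanged since $3v=0$. Then choose five directions of $D_v$ containing exactly $\min(m',3)$ ones of $y$.

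Hence $C_{10}$-freeness forces deletion of every edge whose direction vector is used at least five times. At most $4\cdot 3^k$ directions can carry edges, so $e(G_n)\le 4\cdot 3^k\cdot 2^{n-1}$ and the density is at most $4\cdot 3^k/n$. With your even spreading over the four projective points of $\mathbb{F}_3^2$, $G_n$ is empty or contains a $C_{10}$ for every $n\ge 20$. Passing to $k=3$ only changes the constant. (Separately, in your character sum the factor $1+\omega^{\langle\chi,v_j\rangle}$ equals $2$ when $\langle\chi,v_j\rangle=0$, so the error terms are not bounded by $1$; this is moot.)

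What is missing is a rule that sees the individual vectors rather than their sum, together with a way to limit which $10$-cycles must be excluded. The paper does both.

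\begin{itemize}
\item It works in one edge layer $L_r(n)$ at a time and takes the union over odd $r$. These layers are pairwise vertex-disjoint and carry exactly half the edges. So only $C_{10}$s inside a single layer matter, and these come in the five shapes $\mathcal{H}_1,\dots,\mathcal{H}_5$.
\item In $L_r(n)$ it takes $v_i\in\mathbb{F}_3^r$ and a fixed $\beta\neq 0$. A lower vertex $X$ is selected iff $\{\beta\}\cup\{v_i:i\in X\}$ is a basis, and $X\to X\cup\{j\}$ is kept iff the $\beta$-coefficient of $v_j$ in this basis is $1$.
\item A set containing two directions with equal vectors is never selected, which is exactly what defeats the obstruction above.
\item Any $C_{10}$ of kept edges would violate Lemma~\ref{lemma: coefficient}. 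Its proof excludes each shape: a determinant/odd-permutation argument for $\mathcal{H}_1,\mathcal{H}_3,\mathcal{H}_5$, and direct computation in $\mathbb{F}_3$ for $\mathcal{H}_2,\mathcal{H}_4$.
\item Density comes from choosing the $v_i$ at random and repeating in independent rounds on not-yet-selected vertices. This keeps each edge of $L_r(n)$ with probability $1/(4-3^{1-r})>1/4$. Summing over the odd layers gives more than $\frac18 e(Q_n)$.
\end{itemize}
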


Combining Theorems~\ref{main} and~\ref{main2} with the result of Baber~\cite{baber} gives $1/8 \leq \pi_{\square}(C_{10}) \leq 0.36577$.

\section{Proof of Theorem~\ref{main}}

Let $H$ be a $C_{10}$-free subgraph of $Q_n$. 
We first use the $C_{10}$-free condition to bound the number of copies of $C_6$ in $H$. 
We then prove Theorem~\ref{main} by applying this bound to an averaging argument over the subcubes of $Q_n$.

Every cycle in $Q_n$ uses each direction an even number of times. Since a copy of $C_6$ cannot be contained in a subcube of dimension at most two, every copy of $C_6$ uses exactly three directions, each twice. 
Fix an edge $e \in E(H)$ with direction $i$. 
Let $\mathcal{F}_e$ be the family of pairs of distinct directions $\{j,k\} \subseteq [n] \setminus \{i\}$ such that some copy of $C_6$ in $H$ containing $e$ uses exactly the directions $i,j,k$.

\begin{claim}\label{claim: intersect}
The family $\mathcal{F}_{e}$ is intersecting. In particular, $|\mathcal{F}_{e}| \leq n-2$ for every $n \geq 5$.

Suppose otherwise, and let $\{j,k\},\{\ell,m\} \in \mathcal{F}_e$ be
disjoint. 
Write $e = \{x,y\}$. 
Then there exist copies $C$ and $C'$ of $C_6$ in $H$, both containing $e$, whose direction sets are $\{i,j,k\}$ and $\{i,\ell,m\}$, respectively. 
We claim that $V(C) \cap V(C') = \{x,y\}$. 
Otherwise, let $v \notin \{x,y\}$ be a common vertex, and let $z \in \{x,y\}$ be the unique endpoint of $e$ whose $i$-th coordinate agrees with that of $v$.
Since $C$ uses only the directions $i,j,k$, and since $v$ and $z$ have the same $i$-th coordinate, the set of indices at which they differ is a nonempty subset of $\{j,k\}$.
Since $v,z \in V(C')$ as well, the same set is also a subset of $\{\ell,m\}$. 
But this is impossible since $\{j,k\} \cap \{\ell,m\} = \varnothing$. 
Hence $C - e$ and $C' - e$ are internally vertex-disjoint paths of length
five with the same endpoints, so their union is a copy of $C_{10}$ in
$H$, a contradiction. 
Thus $\mathcal{F}_e$ is intersecting, and the Erd\H{o}s--Ko--Rado theorem~\cite{EKR} gives $|\mathcal{F}_e| \leq n-2$ for $n \geq 5$.
This proves Claim~\ref{claim: intersect}.

\end{claim}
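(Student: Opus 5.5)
The plan is to argue by contradiction: a disjoint pair in $\mathcal{F}_e$ would force a copy of $C_{10}$ in $H$, and then Erd\H{o}s--Ko--Rado finishes the count.

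\textbf{Setup.} Suppose $\{j,k\}$ and $\{\ell,m\}$ are disjoint members of $\mathcal{F}_e$, and write $e=\{x,y\}$. By definition of $\mathcal{F}_e$ there are copies $C$ and $C'$ of $C_6$ in $H$, both containing $e$, with direction sets $\{i,j,k\}$ and $\{i,\ell,m\}$. Removing $e$ from each gives two $x$--$y$ paths of length five. If these paths are internally vertex-disjoint, their union is a $10$-cycle in $H$, which is the desired contradiction.

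\textbf{Key step: $V(C)\cap V(C')=\{x,y\}$.} Take a common vertex $v$. Both cycles live in subcubes through $x$: the one spanned by directions $\{i,j,k\}$ and the one spanned by $\{i,\ell,m\}$. So $v$ differs from $x$ only in coordinates from $\{i,j,k\}\cap\{i,\ell,m\}=\{i\}$. Hence $v\in\{x,y\}$.

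To phrase this carefully, let $z\in\{x,y\}$ be the endpoint of $e$ agreeing with $v$ in coordinate $i$. Then $v$ and $z$ differ only in a subset of $\{j,k\}$, because both lie on $C$. By the same reasoning on $C'$, they differ only in a subset of $\{\ell,m\}$. Since these sets are disjoint, the difference set is empty and $v=z$. This step is the only real content, and it is easy once one notices that each $6$-cycle sits in a $3$-dimensional subcube containing $e$.

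\textbf{Counting.} With the contradiction established, $\mathcal{F}_e$ is an intersecting family of $2$-subsets of the $(n-1)$-set $[n]\setminus\{i\}$. Erd\H{o}s--Ko--Rado with $k=2$ requires $n-1\ge 4$, i.e.\ $n\ge5$, and then gives $|\mathcal{F}_e|\le\binom{n-2}{1}=n-2$.

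The one point to double-check is the range of $n$. For $n-1=4$ a triangle family has size $3=n-2$, so the bound still holds there. For smaller $n$ a triangle would exceed $n-2$, which is exactly why the claim assumes $n\ge5$.
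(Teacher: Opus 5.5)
Your proposal is correct and follows essentially the same route as the paper. You show that two $C_6$'s through $e$ with disjoint extra direction pairs meet only in $\{x,y\}$, using the endpoint $z$ that agrees with $v$ in coordinate $i$; you glue $C-e$ and $C'-e$ into a $C_{10}$; and you then apply Erd\H{o}s--Ko--Rado on the $(n-1)$-set $[n]\setminus\{i\}$, with your triangle remark correctly explaining why $n\ge 5$ is needed.
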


\begin{claim}\label{claim: directions} Fix distinct directions $i,j,k \in [n]$ and an edge $e$ of direction $i$. Let $Q$ be the unique $3$-dimensional subcube containing $e$ whose direction set is $\{i,j,k\}$. Then $Q$ contains exactly eight copies of $C_6$ containing $e$. 

Let $C$ be a copy of $C_6$ in $Q$ containing $e$. 
Then $P = C - e$ is a path of length five between the endpoints of $e$. 
Since $C$ uses each of the directions $i,j,k$ twice and $e$ has direction $i$, the path $P$ uses direction $i$ once and directions $j$ and $k$ twice. 
Traverse $P$ starting from a fixed endpoint of $e$. 
Note that $P$ cannot start with direction $i$, since the unique edge of direction $i$ incident with its initial endpoint is $e$. 
Suppose first that $P$ starts with direction $j$. 
A direct check shows that the possible direction sequences along $P$ are exactly: $(j,i,k,j,k)$, $(j,k,i,j,k)$, $(j,k,i,k,j)$, and $(j,k,j,i,k)$. 
Interchanging $j$ and $k$ gives the remaining four paths. 
Adding $e$ to any of the paths gives a distinct copy of $C_6$. 
This proves Claim~\ref{claim: directions}.

\end{claim}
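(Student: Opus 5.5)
The plan is to count the $6$-cycles of $Q\cong Q_3$ globally and then distribute them over the edges using the symmetry of $Q_3$. This avoids the case analysis of direction sequences. The direct enumeration of paths $P=C-e$, as sketched in the statement, would serve as a cross-check.

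\textbf{Step 1: every $6$-cycle is determined by the two vertices it misses.} A copy of $C_6$ in $Q$ uses six of the eight vertices of $Q$. It is therefore a Hamiltonian cycle of the induced subgraph $Q-\{u,v\}$ for some pair $\{u,v\}$. I will split into cases according to the distance $d(u,v)\in\{1,2,3\}$.

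\textbf{Step 2: the three cases.}
\begin{itemize}
\item If $d(u,v)=3$, so $u,v$ are antipodal, the remaining six vertices induce exactly a $6$-cycle. This is the middle two layers of $Q_3$, where each vertex has exactly two neighbours. There are $4$ antipodal pairs, giving $4$ copies.
\item If $d(u,v)=1$, so $uv$ is an edge, then $Q-\{u,v\}$ is two $4$-cycles sharing the edge opposite to $uv$. It is a $6$-cycle plus one chord, so it has a unique Hamiltonian cycle. There are $12$ edges, giving $12$ copies.
\item If $d(u,v)=2$, let $w$ be a common neighbour of $u$ and $v$. Then $w$ has degree $1$ in $Q-\{u,v\}$, so there is no Hamiltonian cycle.
\end{itemize}
Different missing pairs give different vertex sets, so these cycles are all distinct. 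Hence $Q$ contains exactly $16$ copies of $C_6$.

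\textbf{Step 3: distribute over the edges.} Each $6$-cycle has $6$ edges, so the number of (cycle, edge) incidences is $16\cdot 6=96$. The automorphism group of $Q_3$ is edge-transitive, so every one of the $12$ edges lies in the same number of $6$-cycles. That number is $96/12=8$, which proves the claim for $e$.

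The only step needing care is Step 2, specifically checking uniqueness of the Hamiltonian cycle when $d(u,v)=1$, and that the antipodal case really induces a cycle. Both are small explicit checks. As a sanity check, the $8$ cycles through $e$ should match the eight direction sequences $(j,i,k,j,k)$, $(j,k,i,j,k)$, $(j,k,i,k,j)$, $(j,k,j,i,k)$ and their $j\leftrightarrow k$ swaps. Counting them: $2$ are antipodal-type (of the $4$ antipodal pairs, exactly two avoid both endpoints of $e$), and $6$ are of the two-faces type.
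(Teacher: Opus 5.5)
Your proof is correct, but it takes a different route from the paper.

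The paper argues locally. It traverses $P=C-e$ from a fixed endpoint and notes that $P$ uses direction $i$ once and $j,k$ twice each, and cannot start with $i$. It then lists, by a ``direct check'', the four admissible direction sequences beginning with $j$, plus the four obtained by swapping $j$ and $k$.

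You instead classify all $6$-cycles of $Q\cong Q_3$ by the pair of vertices they miss:
\begin{itemize}
\item antipodal pairs give the $4$ middle-layer hexagons;
\item adjacent pairs give the unique Hamiltonian cycle of the $2\times 3$ ladder, for $12$ cycles;
\item pairs at distance two are excluded because a common neighbour has degree one.
\end{itemize}
This gives $16$ in total, and edge-transitivity of $Q_3$ then yields $16\cdot 6/12=8$.

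The paper's enumeration needs no symmetry argument and describes the eight cycles through $e$ explicitly, in the direction-sequence language used elsewhere. However, its check is left to the reader. Your argument replaces that check with a few structural verifications, all of which you carry out correctly. As a by-product, it proves that $Q_3$ contains exactly $16$ copies of $C_6$, a fact the paper uses without proof when computing $N(Q_n,C_6)$ in the proof of Corollary~\ref{generalized_corollary}.

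Your cross-check $8=2+6$ is also accurate. There are two antipodal-type cycles, one for each antipodal pair avoiding both endpoints of $e$. There are six ladder-type cycles, one for each edge that is neither $e$, nor shares a vertex with $e$, nor is opposite to $e$.
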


\begin{claim}\label{claim: N_6} For $n \geq 5$, the number $N_H$ of copies of $C_6$ in $H$ satisfies $N_H \leq \frac{4}{3}(n-2)e(H)$. In particular, $N_H \leq \frac{2}{3}n(n-2)2^n$.

Each copy of $C_6$ contributes six incidences with its edges. 
Conversely, Claims~\ref{claim: intersect} and~\ref{claim: directions} imply that every edge $e \in E(H)$ is contained in at most $8|\mathcal{F}_e| \leq 8(n-2)$ copies of $C_6$. 
Thus, by double counting, $N_H \leq \frac{4}{3}(n-2)e(H)$. 
Also, since $e(H) \leq n2^{n-1}$, it follows that $N_H \leq \frac{2}{3}n(n-2)2^n$. 
This proves Claim~\ref{claim: N_6}.

Now let $\mathcal{Q}_d(n)$ be the set of all $d$-dimensional subcubes in $Q_n$, and let $M = |\mathcal{Q}_d(n)| = \binom{n}{d}2^{n-d}$. 
A standard averaging argument, obtained by double counting pairs $(f,Q)$ with $Q \in \mathcal{Q}_d(n)$ and $f \in E(H \cap Q)$, gives 
\begin{equation}\label{equation: average}
    \frac{1}{M} \sum_{Q \in \mathcal{Q}_d(n)} \frac{e(H \cap Q)}{e(Q_d)} = \frac{e(H)}{e(Q_n)}.
\end{equation}

\end{claim}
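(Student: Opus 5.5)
The claim has two parts: the bound on $N_H$, and the averaging identity~\eqref{equation: average}. I will handle them in that order. Both are double counts, so I do not expect a real obstacle. The only care needed is in counting the subcubes through a fixed edge, and in checking that the factor $8|\mathcal{F}_e|$ from Claims~\ref{claim: intersect} and~\ref{claim: directions} counts every $C_6$ through $e$.

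For the bound on $N_H$, I double count the pairs $(e,C)$ with $C$ a copy of $C_6$ in $H$ and $e\in E(C)$. Each $C$ has six edges, so there are exactly $6N_H$ such pairs. Now fix $e$ with direction $i$. Any $C_6$ through $e$ uses exactly three directions, say $i,j,k$, so it lies in the unique $3$-dimensional subcube through $e$ with direction set $\{i,j,k\}$. Moreover $\{j,k\}\in\mathcal{F}_e$ by the definition of $\mathcal{F}_e$. By Claim~\ref{claim: directions}, each such subcube contains at most eight copies of $C_6$ through $e$ (the claim counts copies in $Q$, hence at most eight lie in $H$). By Claim~\ref{claim: intersect}, $|\mathcal{F}_e|\le n-2$. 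Hence $6N_H\le 8(n-2)e(H)$, which gives $N_H\le\frac43(n-2)e(H)$. Substituting $e(H)\le e(Q_n)=n2^{n-1}$ yields $N_H\le\frac23 n(n-2)2^n$.

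For~\eqref{equation: average}, I count the pairs $(f,Q)$ with $Q\in\mathcal{Q}_d(n)$ and $f\in E(H\cap Q)$. Summing over $Q$ gives $\sum_Q e(H\cap Q)$. Alternatively, fix an edge $f$ of direction $i$. A $d$-subcube contains $f$ exactly when its free direction set contains $i$ and its fixed coordinates agree with those of $f$. Such a subcube is therefore determined by choosing the other $d-1$ free directions from $[n]\setminus\{i\}$. So $f$ lies in $\binom{n-1}{d-1}$ subcubes, and
\[
\sum_{Q\in\mathcal{Q}_d(n)} e(H\cap Q)=\binom{n-1}{d-1}e(H).
\]
It remains to check that $M\,e(Q_d)=\binom{n-1}{d-1}e(Q_n)$. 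Using $d\binom{n}{d}=n\binom{n-1}{d-1}$, we get
\[
M\,e(Q_d)=\binom{n}{d}2^{n-d}\cdot d2^{d-1}=n\binom{n-1}{d-1}2^{n-1}=\binom{n-1}{d-1}e(Q_n).
\]
Dividing the previous display by $M\,e(Q_d)$ then gives~\eqref{equation: average}.
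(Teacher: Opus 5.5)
Your proposal is correct and follows the paper's argument. You double count (edge, $C_6$) incidences using the bound of at most $8|\mathcal{F}_e|\le 8(n-2)$ copies through each edge, and then double count (edge, subcube) pairs for the averaging identity; you additionally verify the counts $\binom{n-1}{d-1}$ and $M\,e(Q_d)=\binom{n-1}{d-1}e(Q_n)$ that the paper leaves as a standard computation.
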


We are now ready to complete the proof.

\phantomsection
\label{proof:main}
\begin{proof}[Proof of Theorem~\ref{main}]

Fix $d \geq 3$ and assume that $n \geq \max\{d,5\}$. 
Let $\alpha = \ex(Q_d,C_6)/e(Q_d)$.
Call a subcube $Q \in \mathcal{Q}_{d}(n)$ \emph{bad} if $H \cap Q$ contains a copy of $C_6$. 
Let $b$ be the number of bad subcubes. 
If $Q$ is bad, we use $e(H \cap Q) \leq e(Q_d)$. 
If $Q$ is not bad, we use $e(H \cap Q) \leq \alpha e(Q_d)$, since $H \cap Q$ is $C_6$-free. 
It follows that $\sum_{Q \in \mathcal{Q}_d(n)} e(H \cap Q) \leq (M-b)\alpha e(Q_d) + be(Q_d)$.
Dividing by $Me(Q_d)$ and applying ~\eqref{equation: average}, we obtain
\[
    \frac{e(H)}{e(Q_n)} \leq \alpha + (1-\alpha)\frac{b}{M} \leq \alpha + \frac{b}{M}.
\]
It remains to bound $b/M$. 
To bound $b$, we double count pairs $(C,Q)$, where $C$ is a copy of $C_6$ in $H$ and $Q \in \mathcal{Q}_d(n)$ contains $C$. 
Each bad subcube contributes at least one such pair.
Conversely, every copy of $C_6$ is contained in exactly $\binom{n-3}{d-3}$ $d$-dimensional subcubes. 
Thus, by Claim~\ref{claim: N_6}, we obtain 
\[
    \frac{b}{M} \leq N_H\frac{\binom{n-3}{d-3}}{\binom{n}{d}2^{n-d}} \leq \frac{2^{d+1}}{3}\frac{d(d-1)(d-2)}{n-1}.
\]

For fixed $d \geq 3$, the bound on $b/M$ tends to zero as $n \rightarrow \infty$. 
Since $H$ was arbitrary, taking the maximum over all $C_{10}$-free subgraphs of $Q_n$ and then letting $n \to \infty$ gives
\[
    \pi_{\square}(C_{10})
    \leq
    \frac{\ex(Q_d,C_6)}{e(Q_d)}.
\]
Recall that the sequence $\ex(Q_d,C_{6})/e(Q_d)$ is non-increasing in $d$ and thus converges to $\pi_{\square}(C_6)$ as $d \rightarrow \infty$. 
It follows that $\pi_{\square}(C_{10}) \leq \pi_{\square}(C_6)$. 
This proves Theorem~\ref{main}.

\end{proof}

\section{Proof of Theorem~\ref{main2}}

For the lower bound, it is convenient to regard each vertex of $Q_n$ as its support in $[n]$. 
For example, we regard the vertex $x = (1,0,1) \in \{0,1\}^3$ as the set $\{1,3\} \subseteq [3]$.
For every $r \in [n]$, let $L_r(n)$ be the edge layer of $Q_n$ induced by $\binom{[n]}{r-1} \cup \binom{[n]}{r}$. 
Call the vertices in $\binom{[n]}{r-1}$ the \emph{lower vertices} of $L_r(n)$ and call those in $\binom{[n]}{r}$ the \emph{upper vertices}. 
We first prove the following result for a single edge layer.

\begin{proposition}\label{proposition: layer_lower}
For all integers $n \geq 1$ and every $r \in [n]$, there is a $C_{10}$-free subgraph $H \subseteq L_r(n)$ satisfying
\[
    e(H) \geq \frac{e(L_r(n))}{4-3^{1-r}}.
\]
\end{proposition}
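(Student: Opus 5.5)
The plan is to build $H$ from a labelling of $[n]$ by $\mathbb{F}_3$, in the spirit of the $\mathbb{F}_3$ idea announced in the introduction, and then to average over labellings. The labelling should make the surviving edges rigid enough that every cycle in $H$ has a very restricted shape.

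\textbf{Construction.} Fix a map $c\colon [n]\to\mathbb{F}_3$ and put $\sigma(S)=\sum_{b\in S}c(b)$. For a lower vertex $S$ and an element $a\notin S$, keep the edge $\{S,S\cup\{a\}\}$ if and only if $c(a)=\sigma(S)$.
\begin{itemize}
\item Seen from a lower vertex $S$, the kept edges go to the sets $S\cup\{a\}$ with $a$ in the single colour class $c^{-1}(\sigma(S))$.
\item Seen from an upper vertex $A$, the condition $c(a)=\sigma(A)-c(a)$ reads $2c(a)=\sigma(A)$, that is, $c(a)=2\sigma(A)$. So $A$ keeps edges only to the sets $A\setminus\{a\}$ with $a$ in the single class $c^{-1}(2\sigma(A))$.
\end{itemize}
Along any walk in $H$ the colour of the element added or removed is therefore determined, and $\sigma$ changes by a controlled amount at each step. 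Tracking this, a cycle alternates between removing and adding elements of the same colour class $\gamma$. I expect this to force $\sigma$ to stay constant on the lower vertices of a component and to fix $\gamma$. Each component of $H$ would then be contained in a ``Johnson-type'' piece: all sets of the form $T\cup X$, with $T$ fixed and $X$ ranging over subsets of one colour class of fixed size.

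\textbf{Excluding $C_{10}$.} Next I classify the cycles inside such a piece. They are cycles in a bipartite Kneser/Johnson-layer structure within one colour class, and the class sizes are constrained by the condition on $\sigma$. I expect these components to be $C_{10}$-free only after a further restriction. A $C_{10}$ moves five elements in and out, so I would try to show that inside a component the set symmetric difference is confined to at most two elements of the colour class. If that fails, I would add a secondary rule: among the kept edges at each lower vertex, retain only those whose added element is minimal in a fixed ordering of its colour class. This makes the components trees or even cycles of length at most $8$.

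\textbf{Counting, and the main obstacle.} To get the density $1/(4-3^{1-r})$, I would take $c$ uniformly at random and compute the expected number of kept edges, then choose a labelling $c$ achieving at least the expectation. An edge $\{S,S\cup\{a\}\}$ survives the colour test with probability $1/3$. The correction from $1/3$ down to $1/(4-3^{1-r})=3^{r-1}/(4\cdot 3^{r-1}-1)$ should come from the secondary pruning. Its loss depends on the distribution of $\sigma(S)$ over the $r-1$ elements of $S$, and that is where the factor $3^{1-r}$ enters. The case $r=1$, which gives fraction $1/3$ (the trivial star, which has no cycles at all), serves as a sanity check. The step I expect to be hardest is making the $C_{10}$-freeness and the exact constant match simultaneously. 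I would first work out $r=2$, where the target is $3/11$, by hand to calibrate the pruning rule before doing the general computation.
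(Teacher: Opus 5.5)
Your proposal has a genuine gap. The basic rule does not give a $C_{10}$-free graph, and the fallback pruning destroys the density.

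\textbf{The basic rule contains $C_{10}$.} The rule $c(a)=\sigma(S)$ depends only on colours, so it is invariant under permuting each colour class. This forces every component to be a full Johnson layer, not a small piece. Take a lower vertex $S$ with $\sigma(S)=\gamma$, and set $K_\gamma=c^{-1}(\gamma)$, $T=S\setminus K_\gamma$ and $k=|S\cap K_\gamma|$. Then every $T\cup X$ with $X\in\binom{K_\gamma}{k}$ also has $\sigma(T\cup X)=\sigma(T)+k\gamma=\gamma$. Hence every edge from $T\cup X$ to $T\cup X\cup\{a\}$ with $a\in K_\gamma\setminus X$ is kept, and the component is the entire layer between $\binom{K_\gamma}{k}$ and $\binom{K_\gamma}{k+1}$, shifted by $T$. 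Already for $r=2$ the kept graph is the disjoint union of subdivided complete graphs on the colour classes. A class containing $a,b,c,d,e$ yields the $C_{10}$
$\{a\},\{a,b\},\{b\},\{b,c\},\{c\},\{c,d\},\{d\},\{d,e\},\{e\},\{e,a\}$,
and a random $c$ has classes of size about $n/3$. So the hoped-for confinement of the symmetric difference to two elements is false.

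\textbf{The fallback loses almost all edges.} Keeping only the minimal added element at each lower vertex gives every lower vertex degree at most one, so $H$ is a forest. But then $e(H)\le\binom{n}{r-1}=e(L_r(n))/(n-r+1)$, which is below $e(L_r(n))/4$ once $n>r+3$. Note also that the proposition is only a lower bound, so there is no need to prune from $1/3$ down to the exact constant. The claim that $3^{1-r}$ arises from the distribution of $\sigma(S)$ is unsupported. The real task is a $C_{10}$-free rule that keeps a constant fraction of the edges, and your plan does not supply one.

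\textbf{What the paper does instead.} Scalar colours are replaced by vectors $v_i\in\mathbb{F}_3^r$ together with a fixed nonzero $\beta$. A lower vertex $X$ is selected if $\{\beta\}\cup\{v_i:i\in X\}$ is a basis. The edge from $X$ to $X\cup\{j\}$ is kept iff the coefficient of $\beta$ in $v_j$, with respect to that basis, equals $1$. This breaks the symmetry inside classes. $C_{10}$-freeness is proved by passing to the quotient by $\operatorname{span}\{v_i:i\in W\}$ and applying Lemma~\ref{lemma: coefficient}. That lemma says no $z$ can have coefficient one at both cycle directions of all five upper vertices. It is verified on the five Axenovich--Martin--Winter types using a determinant-sign argument, and in one case the fact that $s(s-1)=1$ has no solution in $\mathbb{F}_3$.

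\textbf{Where the constant comes from.} A single round keeps an edge with probability only $c_r/3$, where $c_r=\prod_{k=1}^{r-1}(1-3^{-k})$. The paper therefore iterates independent rounds, each using only edges neither of whose endpoints was selected earlier. This keeps the rounds vertex-disjoint, so the union stays $C_{10}$-free. Each round blocks an edge with probability $c_r(4/3-3^{-r})$, and summing the geometric series gives $\frac{c_r/3}{c_r(4/3-3^{-r})}=\frac{1}{4-3^{1-r}}$. Your plan contains neither the vector/basis retention rule nor the rounds argument, and without them you have no $C_{10}$-free construction and no density computation.
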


We shall need the following notation throughout.
Let $B=\{b_1,\ldots,b_m\}$ be a basis of an $m$-dimensional vector
space $V$ over $\mathbb{F}_3$, and let $z\in V$.
Writing $z = \lambda_1b_1 + \ldots + \lambda_mb_m$, we say that $b_j$ has \emph{coefficient one in $z$ with respect to $B$} if $\lambda_j = 1$. 
The proof of Proposition~\ref{proposition: layer_lower} relies on the following lemma, whose proof is given in the appendix.

\begin{lemma}\label{lemma: coefficient}

Let $C$ be a copy of $C_{10}$ contained in an edge layer of $Q_n$. 
Let $W \subseteq[n]$ be a maximal set such that each upper vertex of $C$
contains $W$, and write its five upper vertices in cyclic order as
$W \cup U_0, \ldots, W \cup U_4$, where $|U_i| = m$ for every $i \in \{0,\ldots, 4\}$. 
Let $V$ be any $m$-dimensional vector space over $\mathbb{F}_3$ and assume that for each $x \in U_0 \cup \ldots \cup U_4$ there is vector $v_x$ in V such that $\{v_x: x \in U_i\}$ is a basis of $V$ for every
$i \in \{0,\ldots,4\}$. 
If $p_i,q_i \in U_i$ are the directions of the two edges of $C$ incident
to $W \cup U_i$, then, for every $z \in V$, there is an $i \in \{0, \ldots, 4\}$ such that at most one of $v_{p_{i}}$ and $v_{q_{i}}$ has  coefficient one in $z$ with respect to $\{v_x : x \in U_i\}$.

\end{lemma}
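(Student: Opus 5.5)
The plan is to argue by contradiction. Suppose some $z \in V$ is such that for every $i$ both $v_{p_i}$ and $v_{q_i}$ have coefficient one in $z$ with respect to $B_i := \{v_x : x \in U_i\}$. I will turn this into a sign condition that must propagate consistently around the odd cycle, and show that over $\mathbb{F}_3$ it cannot.

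\textbf{Step 1 (combinatorial structure).} In an edge layer the cycle $C$ alternates between upper and lower vertices. The lower vertex between $W\cup U_i$ and $W\cup U_{i+1}$ (indices mod $5$) is $W\cup (U_i\setminus\{q_i\}) = W\cup(U_{i+1}\setminus\{p_{i+1}\})$. Hence
\[
U_{i+1} = (U_i\setminus\{q_i\})\cup\{p_{i+1}\}, \qquad p_{i+1}\neq q_i .
\]
So the bases $B_0,\dots,B_4,B_0$ form a closed sequence of five single-element exchanges in $V$.

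\textbf{Step 2 (one exchange).} Fix one exchange step $B_i \to B_{i+1}$ and write $v_{p_{i+1}} = \sum_{x\in U_i} \alpha_x v_x$. Since $B_{i+1}$ is a basis, $a_i := \alpha_{q_i} \neq 0$. If $z$ has coefficient $c$ on $v_{q_i}$ in $B_i$, then $z$ has coefficient $c/a_i$ on $v_{p_{i+1}}$ in $B_{i+1}$. Under the contradiction hypothesis both coefficients equal $1$, so $c=1$ and $c/a_i = 1$, which forces $a_i = 1$ at every step.

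\textbf{Step 3 (closing the cycle).} I now want to show that five exchanges with all $a_i=1$ cannot return to the starting basis. The natural tool is a linear dependency among the vectors involved. Let $S = U_0\cup\dots\cup U_4$. Maximality of $W$ means every $x\in S$ is absent from some $U_i$. Since only five exchanges occur and every removed element must later be re-added, $|S| - m$ is $1$ or $2$. I would do a short case analysis on the exchange pattern, that is, on how the removed elements $q_i$ are matched with the added elements $p_j$.

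In the cleanest case $|S|=m+1$ there is a unique (up to scaling) dependency $\sum_{x\in S} c_x v_x = 0$. Every $x$ that is ever exchanged has $c_x\ne 0$. For each step, solving for $v_{p_{i+1}}$ gives
\[
a_i = -\,c_{q_i}/c_{p_{i+1}} .
\]
The condition $a_i=1$ gives $c_{p_{i+1}} = -c_{q_i}$. In this case $p_{i+1}$ is exactly the element $q_{i-1}$ removed at the previous step. So $c$ flips sign at each of the five steps, and going once around the odd cycle yields $c_x = -c_x$. Hence $2c_x = 0$, so $c_x=0$ in $\mathbb{F}_3$, a contradiction.

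The case $|S|=m+2$ is handled the same way. There the exchanges split into interleaved chains governed by a two-dimensional space of dependencies. I expect one can choose a dependency supported on the relevant elements, or use the parity of the number of steps within each chain, to again derive $c=-c$ with $c\ne 0$.

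\textbf{Main obstacle.} The hardest part is the $|S|=m+2$ case. There the exchange coefficient $a_i$ need not be a simple ratio of one fixed dependency's coefficients, and I will have to track how the coordinates of $z$ evolve through $\lambda_x \mapsto \lambda_x - (c/a_i)\alpha_x$. I would first try to project $V$ onto a quotient by the span of the common vectors $\bigcap_i B_i$. This should reduce the problem to dimension at most $2$, where all bases and exchanges can be listed explicitly over $\mathbb{F}_3$ and checked. The odd length $5$ together with characteristic $\neq 2$ is exactly what should make the sign contradiction appear.
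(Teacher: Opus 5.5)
\textbf{There is a genuine gap in Step 3.} Your Steps 1 and 2 are correct. The condition $a_i=1$ is exactly the paper's key observation that $v_{p_{i+1}}-v_{q_i}\in\operatorname{span}\{v_x:x\in U_i\cap U_{i+1}\}$, i.e.\ that replacing $v_{q_i}$ by $v_{p_{i+1}}$ in place preserves the determinant. Your $|S|=m+1$ argument is also correct. It is the dependency (Cramer's rule) version of the paper's determinant argument for the type $(abcd,bcde,cdea,deab,eabc)$.

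The failure is that Step 3's target, namely that five exchanges with all $a_i=1$ cannot close up, is false. For the type $(abc,bcd,cde,dea,eab)$, the in-place replacements take the ordering $(a,b,c)$ to $(b,c,a)$, an even permutation, so no sign ever flips. Concretely, over $\mathbb{F}_3$ take $v_a,v_b,v_c$ to be the standard basis, $v_d=(1,0,2)$ and $v_e=(0,1,1)$. Then all five triples are bases and all five $a_i$ equal $1$. The same happens for $(abc,bcd,cde,bde,bda)$, e.g.\ with $v_d=(1,0,1)$, $v_e=(1,1,0)$.

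Worse, for $(abc,bcd,cde,dea,eab)$ the full hypothesis of the lemma is satisfiable over $\mathbb{F}_5$. Take the standard basis, $v_d=(1,2,3)$, $v_e=(3,2,1)$ and $z=(1,3,1)$, and write $(\nu_0,\ldots,\nu_4)=(v_a,\ldots,v_e)$. Then $z=\nu_i+3\nu_{i+1}+\nu_{i+2}$ for every $i$, and all five triples are bases. So no argument that uses only ``odd length and characteristic $\neq 2$'' can succeed here.

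What is missing is to use the entire coordinate vector of the single $z$, together with arithmetic specific to $\mathbb{F}_3$. The paper writes $z=\nu_i+s_i\nu_{i+1}+\nu_{i+2}$ and eliminates $z$ to force $s_0=\cdots=s_4=s$ with $s(s-1)=1$, which has no solution in $\mathbb{F}_3$. For $(abc,bcd,cde,bde,bda)$ it compares $z=v_c+\lambda v_d+v_e=v_b+\mu v_d+v_e$. This gives $v_b-v_c\in\operatorname{span}(v_d)$, contradicting that $v_b,v_c,v_d$ form a basis.

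\textbf{Your case setup has further errors.} Your counting only yields $|S|\le 5$, not $|S|-m\in\{1,2\}$. The type $(ab,bc,cd,de,ea)$ has $m=2$ and $|S|=5$. Your single-dependency argument does not apply to it, though a determinant sign argument does.

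The proposed quotient by $\bigcap_i B_i$ does nothing. Maximality of $W$ already gives $\bigcap_i U_i=\varnothing$, so there is no common part to quotient out, and the $m=3$ types cannot be reduced to dimension $2$.

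Finally, the classification of exchange patterns is left undone. The paper takes it from the Axenovich--Martin--Winter classification of copies of $C_{10}$ in a layer into five types. A sign mechanism can only handle the three types whose in-place replacements close with an odd permutation: $(ab,bc,cd,de,ea)$, $(cde,dea,aeb,ebc,bcd)$ and $(abcd,bcde,cdea,deab,eabc)$. You carried it out for only one of these.
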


\begin{proof}[Proof of Proposition~\ref{proposition: layer_lower}]

Fix a non-zero vector $\beta\in\mathbb{F}_3^r$ and choose a vector
$v_i\in\mathbb{F}_3^r$ for every $i\in[n]$.
Call a lower vertex $X \in \binom{[n]}{r-1}$ \emph{selected} if $\{\beta\} \cup \{v_i: i \in X\}$ is a basis, and call an upper vertex $Y \in \binom{[n]}r$ selected if $\{v_i: i \in Y\}$ is a basis. 
For an edge $XY \in E(L_r(n))$ with  $X$ selected and $Y = X \cup \{j\}$, write $v_j = \lambda_0 \beta + \sum_{i \in X} \lambda_i v_i$ and retain $XY$ if and only if $\lambda_0 = 1$. 
Let $F$ be the resulting graph. 
Note that every retained edge has both endpoints selected, since replacing $\beta$ by $v_j$ in the basis $\{\beta\} \cup \{v_i: i \in X\}$ gives the basis $\{v_i: i \in Y\}$.

\begin{claim}\label{claim: coefficient_free} The graph $F$ is $C_{10}$-free.

Suppose otherwise, and let $C$ be a copy of $C_{10}$ in $F$. 
Let $W \subseteq [n]$ be a maximal set such that each upper vertex of $C$ contains $W$.
Let $V = \mathbb{F}_3^r/\operatorname{span}(\{v_i: i \in W\})$, and denote the images of $\beta$ and $v_x$ in $V$ by $b$ and $w_x$, respectively. 
Consider an edge of $C$ whose upper vertex is $W \cup U$ and whose direction is $x \in U$. 
Since its lower endpoint is selected, $\{b\}\cup\{w_y: y \in U - \{x\}\}$ is a basis of $V$. 
Moreover, since the edge was retained, we have $w_x = b + \sum_{y \in U-\{x\}} \lambda_y w_y$. 
Equivalently, $b = w_x - \sum_{y \in U-\{x\}} \lambda_y w_y$. 
This implies that $\{w_y: y \in U\}$ is a basis of $V$, and $w_x$ has coefficient one in $b$ with respect to this basis. 
Applying this argument to the two edges incident with each upper vertex of $C$ contradicts Lemma~\ref{lemma: coefficient}. 
This proves Claim~\ref{claim: coefficient_free}.

\end{claim}

We now choose $v_1, \ldots, v_n$ independently and uniformly at random from $\mathbb{F}_3^r$, and let $R$ be the set of selected vertices. 
Let $c_r = \prod_{k=1}^{r-1}(1 - 3^{-k})$, where the empty product is one. 
Fix an edge $e = XY$ with $|X| = r - 1$. 
Note that $\operatorname{Pr}(X \in R) = c_r$ and $\operatorname{Pr}(Y \in R) = c_r(1 - 3^{-r})$. 
Conditional on $X \in R$, the coefficient $\lambda_0$ is uniformly distributed over $\mathbb{F}_3$, and $Y \in R$ if and only if $\lambda_0 \neq 0$, while $e \in E(F)$ if and only if $\lambda_0 = 1$. 
Hence, $\operatorname{Pr}(X \in R\text{ and }Y\in R) = 2c_r/3$ and $\operatorname{Pr}(e \in E(F))=c_r/3$. 
Applying inclusion-exclusion to the events $X \in R$ and $Y \in R$, we obtain
$\operatorname{Pr}(\{X,Y\} \cap R \neq \varnothing) = \operatorname{Pr}(X \in R) + \operatorname{Pr}(Y \in R) - \operatorname{Pr}(X \in R\text{ and }Y \in R) = c_r(4/3 - 3^{-r})$.

Now repeat the construction independently in rounds, keeping $\beta$ fixed. 
In round $t$, let $F_t$ denote the resulting graph and let $R_t$ be the set of selected vertices.
Let $H_t$ consist of the edges of $F_t$ neither of whose endpoints belong to $R_1 \cup \ldots \cup R_{t-1}$.
Hence, no vertex is incident with edges from two different graphs $H_t$. 
Now let $H = \bigcup_{t \geq 1} H_t$. 
Since each $H_t \subseteq F_t$ is $C_{10}$-free by Claim~\ref{claim: coefficient_free}, their union $H$ is also $C_{10}$-free. 
Also, for a fixed edge $e \in E(L_r(n))$, we have $\operatorname{Pr}(e\in E(H_t)) = (1 - c_r(4/3 - 3^{-r}))^{t-1} \cdot c_r/3$, since neither endpoint may be selected in the first $t-1$ rounds and $e$ is retained in round $t$. 
Summing over all rounds gives
\[
    \operatorname{Pr}(e \in E(H)) =\sum_{t=1}^{\infty}
    \left(1-c_r\left(\frac{4}{3} - 3^{-r}\right)\right)^{t-1} \cdot \frac{c_r}{3} = \frac{1}{4-3^{1-r}}.
\]
Thus $\mathbb{E}[e(H)]=e(L_r(n))/(4-3^{1-r})$, so there exists a sequence of vector assignments, one for each round, for which the resulting graph $H$ satisfies $e(H) \geq e(L_r(n))/(4-3^{1-r})$. 
This completes the proof of Proposition~\ref{proposition: layer_lower}.

\end{proof}

We now apply Proposition~\ref{proposition: layer_lower} to all odd edge layers to prove Theorem~\ref{main2}.

\begin{proof}[Proof of Theorem~\ref{main2}]

For every odd $r \in [n]$, choose a graph $H_r \subseteq L_r(n)$ given by Proposition~\ref{proposition: layer_lower}. 
Distinct odd edge layers have disjoint vertex sets, so their union is $C_{10}$-free. 
Hence, for every $n \geq 1$,
\[
    \ex(Q_n,C_{10}) \geq \sum_{\substack{1 \leq r \leq n\\r\text{ odd}}} \frac{e(L_r(n))}{4 - 3^{1-r}}.
\]
For $n \geq 2$, we have $4 - 3^{1-r} < 4$ for every $r$, and exactly half of the edges of $Q_n$ belong to the odd edge layers. 
Hence
\[
    \ex(Q_n,C_{10}) > \frac{1}{4} \sum_{\substack{1 \leq r \leq n\\r\text{ odd}}} e(L_r(n)) = \frac{1}{8}e(Q_n).
\]
This proves Theorem~\ref{main2}. 
Dividing by $e(Q_n)$ and letting $n \rightarrow \infty$ gives the claimed lower bound on $\pi_{\square}(C_{10})$.

\end{proof}

\section{Concluding Remarks}

\paragraph{A simultaneous lower bound.}

It can be shown that, for every $n \geq 2$, the graph constructed in the proof of Theorem~\ref{main2} is simultaneously $C_4$-, $C_6$-, and $C_{10}$-free.

\paragraph{Generalized Tur\'an densities in the hypercube.}

The counting argument in Claim~\ref{claim: N_6} can also be applied to generalized Tur\'an densities in the hypercube (see, e.g., Axenovich, Benz, Offner, and Tompkins~\cite{ABOT}). 
In particular, for fixed cubical graphs $T$ and $H$, let $\ex(Q_n,T,H)$ be the maximum number of copies of $T$ in an $H$-free subgraph of $Q_n$, and let $N(Q_n,T)$ be the number of copies of $T$ in $Q_n$. 
Let
\[
    \mathbf{d}(Q_n,T,H) = \frac{\ex(Q_n,T,H)}{N(Q_n,T)}.
\]
That is, $\mathbf{d}(Q_n,T,H)$ is the maximum proportion of the copies of $T$ in $Q_n$ that can be retained in an $H$-free subgraph of $Q_n$. 
Axenovich, Benz, Offner, and Tompkins~\cite[Lemma~3]{ABOT} showed $\mathbf{d}(Q_n,T,H) \leq \ex(Q_n,H)\allowbreak/e(Q_n)$. 
Thus, a result of Axenovich and Martin~\cite[Theorem~3.3]{AM} implies $\mathbf{d}(Q_n,C_{6},C_{10})\allowbreak\leq\allowbreak1/\sqrt{2} + o(1)$ as $n \rightarrow \infty$. 
We give the following improvement.

\begin{corollary}\label{generalized_corollary}
For every $n \geq 5$, 
\[
    \mathbf{d}(Q_n,C_6,C_{10}) 
    \leq \frac{2}{n-1}.
\]
In particular,
\[
    \lim_{n \rightarrow \infty} \mathbf{d}(Q_n,C_6,C_{10}) = 0.
\]
\end{corollary}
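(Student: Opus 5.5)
The plan is to combine the upper bound of Claim~\ref{claim: N_6} with an exact count of $C_6$'s in $Q_n$. Let $H$ be any $C_{10}$-free subgraph of $Q_n$ with $n \geq 5$. By Claim~\ref{claim: N_6}, the number of copies of $C_6$ in $H$ satisfies
\[
    N_H \leq \frac{2}{3}n(n-2)2^n .
\]
Taking the maximum over all such $H$ therefore gives $\ex(Q_n,C_6,C_{10}) \leq \frac{2}{3}n(n-2)2^n$.

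Next I compute $N(Q_n,C_6)$. As observed before Claim~\ref{claim: intersect}, every copy of $C_6$ in $Q_n$ lies in a unique $3$-dimensional subcube. So it suffices to count the copies of $C_6$ in $Q_3$ and multiply by $|\mathcal{Q}_3(n)| = \binom{n}{3}2^{n-3}$. For $Q_3$, Claim~\ref{claim: directions} says each edge lies in exactly $8$ copies of $C_6$. Since $Q_3$ has $12$ edges and each $C_6$ has $6$ edges, double counting gives $12\cdot 8/6 = 16$ copies. Hence
\[
    N(Q_n,C_6) = 16\binom{n}{3}2^{n-3} = \frac{n(n-1)(n-2)}{3}\,2^{n}.
\]
As a sanity check, the count of $16$ agrees with a direct count: the complements of $C_6$'s in $Q_3$ are pairs of antipodal vertices ($4$ choices), and each pair admits a few Hamiltonian-type cycles on the remaining $6$ vertices, which is consistent with $16$.

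Dividing the two bounds gives
\[
    \mathbf{d}(Q_n,C_6,C_{10}) \leq \frac{\frac{2}{3}n(n-2)2^n}{\frac{1}{3}n(n-1)(n-2)2^n} = \frac{2}{n-1},
\]
and the limit statement follows immediately. The only step needing real care is the exact count of $16$ copies of $C_6$ in $Q_3$, since the whole constant depends on it. I would rely on the edge-incidence double count from Claim~\ref{claim: directions} rather than enumerating cycles by hand, because it reuses a result already proved in the paper.
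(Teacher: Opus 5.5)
Your proof is correct and follows the paper's argument: both bound $\ex(Q_n,C_6,C_{10})$ via Claim~\ref{claim: N_6} and divide by $N(Q_n,C_6)=16\binom{n}{3}2^{n-3}$; the paper keeps the factor $\ex(Q_n,C_{10})/e(Q_n)$ before bounding it by $1$, which is equivalent to your use of $e(H)\le n2^{n-1}$, and your double count $12\cdot 8/6=16$ via Claim~\ref{claim: directions} justifies the $16$ that the paper only asserts. Drop the ``sanity check'', though, because it is false: only $4$ of the $16$ hexagons in $Q_3$ miss an antipodal pair (removing such a pair leaves an induced $C_6$, so each pair gives exactly one cycle), and the other $12$ each miss the two endpoints of an edge.
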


\begin{proof}[Proof of Corollary~\ref{generalized_corollary}]

Taking the maximum over all $C_{10}$-free subgraphs of $Q_n$ in Claim~\ref{claim: N_6} gives $\ex(Q_n,C_6,C_{10})\allowbreak\leq\allowbreak\frac{4}{3}(n-2)\ex(Q_n,C_{10})$.
Every $C_6$ is contained in a unique $3$-dimensional subcube of $Q_n$. 
There are $\binom{n}{3}2^{n-3}$ such subcubes, and each contains exactly $16$ copies of $C_6$. 
Thus $N(Q_n,C_6) = \frac{1}{3}n(n-1)(n-2)2^n$.
Now dividing $\ex(Q_n,C_6,C_{10})$ by $N(Q_n,C_6)$ gives
\[
    \mathbf{d}(Q_n,C_6,C_{10}) \leq \frac{2}{n-1} \frac{\ex(Q_n,C_{10})}{e(Q_n)} \leq \frac{2}{n-1}.
\]
The last inequality follows since $\ex(Q_n,C_{10}) \leq e(Q_n)$. 
This proves Corollary~\ref{generalized_corollary}. 

\end{proof}

{\bf Open Questions.}
Is $\pi_{\square}(C_{10})=1/8$? 
Is $\pi_{\square}(C_{10})=\pi_{\square}(C_6)$?

{\bf AI Disclosure.}
OpenAI's ChatGPT-5.6 Plus assisted in developing the connection between the $C_6$-counting argument and the subcube-averaging argument used in the final part of the proof of Theorem~\hyperref[proof:main]{\ref*{main}}. 
It also assisted in showing that the construction in Theorem~\ref{main2} is $C_{10}$-free. 
It was in particular used to check whether any of the five types considered in the proof of Lemma~\ref{lemma: coefficient} could occur.
Both parts were subsequently checked, shortened, and reworked by the author. 
ChatGPT-5.6 Plus was also used to check the note for misprints and typos.

{\bf Acknowledgments.}
The author would like to thank Maria Axenovich for many discussions and suggestions regarding both the presentation and the content of this note.

\newpage

\appendix

\section{Appendix}

\begin{proof}[Proof of Lemma~\ref{lemma: coefficient}]

Suppose otherwise. 
Then there exists $z\in V$ such that, for every
$i\in\{0,\ldots,4\}$, both $v_{p_i}$ and $v_{q_i}$ have coefficient
one in $z$ with respect to the basis $\{v_x:x\in U_i\}$. 
By a classification of the copies of $C_{10}$ contained in a single
edge layer due to Axenovich, Martin, and Winter~\cite[Proof of Theorem~4]{AMW}, after relabeling the elements appearing in $U_0,\ldots,U_4$, these sets, in cyclic order, have one of the following forms: 
$\mathcal{H}_1=(ab,\allowbreak bc,\allowbreak cd,\allowbreak de,\allowbreak ea)$, $\mathcal{H}_2=(abc,\allowbreak bcd,\allowbreak cde,\allowbreak dea,\allowbreak eab)$, $\mathcal{H}_3=(cde,\allowbreak dea,\allowbreak aeb,\allowbreak ebc,\allowbreak bcd)$, $\mathcal{H}_4=(abc,\allowbreak bcd,\allowbreak cde,\allowbreak bde,\allowbreak bda)$, or $\mathcal{H}_5=(abcd,\allowbreak bcde,\allowbreak cdea,\allowbreak deab,\allowbreak eabc)$. 
Here, $a,b,c,d,e$ are distinct elements of $[n]$, and $abc$ denotes
$\{a,b,c\}$. 
Accordingly, $V$ has dimension $m = 2$ for $\mathcal{H}_1$, dimension $m = 3$ for $\mathcal{H}_2,\mathcal{H}_3,\mathcal{H}_4$, and dimension $m = 4$ for $\mathcal{H}_5$.

Write $\mathcal{H} = (U_0, \ldots, U_4)$, with subscripts taken modulo five. 
For every $i \in \{0, \ldots, 4\}$, let $K_i = U_i \cap U_{i+1}$. 
Then $|K_i| = m-1$ and $U_i = K_i \cup \{q_i\}$ and $U_{i+1}= K_i \cup \{p_{i+1}\}$. 
By assumption, $v_{q_i}$ has coefficient one in $z$ with respect to the basis $\{v_x: x \in U_i\}$, while $v_{p_{i+1}}$ has coefficient one with respect to the basis $\{v_x: x \in U_{i+1}\}$. 
Hence $z = v_{q_i} + k_i = v_{p_{i+1}} + k'_i$ for some $k_i, k'_i \in \operatorname{span}(\{v_x: x\in K_i\})$, and thus $v_{p_{i+1}} = v_{q_i} + (k_i - k'_i)$ and $k_i - k'_i \in \operatorname{span}(\{v_x: x \in K_i\})$.
Fix an ordered basis of $V$ and, for $x_1, \ldots, x_m\in V$, write $\det(x_1, \ldots, x_m)$ for the determinant of the matrix whose columns are the coordinate vectors of $x_1, \ldots, x_m$ in this basis. 
Take any ordering of the vectors indexed by $U_i$. 
Replacing the entry $v_{q_i}$ by $v_{p_{i+1}}$, while leaving the vectors indexed by $K_i$ in the same positions, gives an ordering of the vectors indexed by $U_{i+1}$ with the same determinant.  
Indeed, fix an ordering $K_i = \{x_1, \ldots, x_{m-1}\}$. 
By multilinearity of the determinant\footnote{That is, with all other
arguments fixed, the determinant is linear in each argument, i.e.,
$\det(\ldots, \alpha x + \beta y, \ldots) =\alpha\det(\ldots, x, \ldots) + \beta\det(\ldots, y, \ldots)$.}, we have $\det(v_{x_1},\ldots,v_{x_{j-1}},v_{p_{i+1}},v_{x_j},\ldots,v_{x_{m-1}}) = \det(v_{x_1},\ldots,v_{x_{j-1}},v_{q_i}, v_{x_j},\ldots,v_{x_{m-1}})+ 
\det(v_{x_1},\ldots,v_{x_{j-1}},k_i-k'_i, v_{x_j},\ldots,v_{x_{m-1}})
= \det(v_{x_1},\ldots,v_{x_{j-1}},v_{q_i},v_{x_j},\ldots,v_{x_{m-1}})$. 
The last equality holds because $k_i - k'_i \in \operatorname{span}(\{v_x: x \in K_i\})$, so the columns of the second determinant
are linearly dependent.

\caseheading\label{claim: case135} $\mathcal{H} \in \{\mathcal{H}_1, \mathcal{H}_3, \mathcal{H}_5\}$.

We shall use the preceding replacement argument repeatedly around the cyclic lists below. 
For $\mathcal{H}_1$, going once around the cyclic list gives $\det(v_a,v_b)
    =\det(v_c,v_b)
    =\det(v_c,v_d)
    =\det(v_e,v_d)
    =\det(v_e,v_a)
    =\det(v_b,v_a)
$.
For $\mathcal{H}_3$, it gives
$
    \det(v_c,v_d,v_e)
    =\det(v_a,v_d,v_e)
    =\det(v_a,v_b,v_e)
    =\det(v_c,v_b,v_e)
    =\det(v_c,v_b,v_d)
    =\det(v_c,v_e,v_d)
$.
Finally, for $\mathcal{H}_5$, it gives
$
    \det(v_a, \allowbreak v_b, \allowbreak v_c, \allowbreak v_d)
    =\det(v_e,v_b,v_c,v_d)
    =\det(v_e,v_a,v_c,v_d)
    =\det(v_e,v_a,v_b,v_d)
    =\det(v_e,v_a,v_b,v_c)
    =\det(v_d,v_a,v_b,v_c)
$.
In each case, the last ordering differs from the first by an odd permutation, so its determinant is the negative of the first. 
Since the first and last determinants in each chain are equal, the determinant is zero over $\mathbb{F}_3$, contradicting that the corresponding vectors form a basis.

\caseheading\label{claim: case2} $\mathcal{H} = \mathcal{H}_2$.

For $\mathcal{H}_2$, let $(\nu_0,\nu_1,\nu_2,\nu_3,\nu_4)=(v_a,v_b,v_c,v_d,v_e)$
with subscripts taken modulo five.
There are $s_0,\ldots,s_4\in\mathbb{F}_3$ such that $z=\nu_i+s_i\nu_{i+1}+\nu_{i+2}$ for every $i$.
The equations with indices $0$ and $1$ are $z=\nu_0+s_0\nu_1+\nu_2$ and $z=\nu_1+s_1\nu_2+\nu_3$, respectively, and hence $\nu_3 = \nu_0 + (s_0-1)\nu_1 + (1-s_1)\nu_2$. 
Using the equation with index $0$ to eliminate $z$ from the equation with index $2$, we obtain $\nu_4=\nu_0+s_0\nu_1-s_2\nu_3$. 
Substituting the preceding expression for $\nu_3$ gives $\nu_4 = (1-s_2)\nu_0 + (s_0 - s_2(s_0-1))\nu_1 + s_2(s_1-1)\nu_2$. 
On the other hand, using the equation with index $0$ to eliminate $z$ from the equation with index $4$ gives $\nu_4 = (1-s_4)\nu_0 + (s_0-1)\nu_1 + \nu_2$. 
Since $\nu_0,\nu_1,\nu_2$ form a basis, comparing the coefficients of $\nu_1$ and $\nu_2$ in these two expressions for $\nu_4$ gives $s_2(s_0-1) = 1$ and $s_2(s_1-1) = 1$, respectively. 
Hence $s_0 = s_1$. 
Repeating the same argument with the indices shifted in cyclic order gives $s_i = s_{i+1}$ for every $i$, and thus $s_0 = \ldots = s_4=s$. 
But then $s_2(s_0-1) = 1$ becomes $s(s-1) = 1$, which is impossible over $\mathbb{F}_3$.

\caseheading\label{claim: case4} $\mathcal{H} = \mathcal{H}_4$.

Finally, for $\mathcal{H}_4$, the two edges of $C$ incident with
$W\cup\{c,d,e\}$ have directions $c$ and $e$, while those incident with
$W\cup\{b,d,e\}$ have directions $b$ and $e$. 
Hence $z=v_c+\lambda v_d+v_e=v_b+\mu v_d+v_e$ for some
$\lambda,\mu\in\mathbb{F}_3$. 
Thus $v_b-v_c+(\mu-\lambda)v_d=0$, contradicting that
$v_b,v_c,v_d$ form a basis. 
This proves Lemma~\ref{lemma: coefficient}.

\end{proof}
    

\begin{thebibliography}{ACP+21}
    \providecommand{\url}[1]{\texttt{#1}}
    \providecommand{\urlprefix}{\textsc{url:} }
    \expandafter\ifx\csname urlstyle\endcsname\relax
      \providecommand{\doi}[1]{doi:\discretionary{}{}{}#1}\else
      \providecommand{\doi}{doi:\discretionary{}{}{}\begingroup \urlstyle{rm}\Url}\fi

\bibitem{axenovich} 
M.~Axenovich.
\newblock Extremal numbers for cycles in a hypercube.
Discrete Applied Mathematics 341 (2023), 1--3.

\bibitem{ABOT} 
M.~Axenovich, L.~Benz, D.~Offner, and C.~Tompkins.
\newblock Generalized Tur\'an densities in the hypercube.
Discrete Mathematics 346 (2023), no. 2, 113238.

\bibitem{AM}
M.~Axenovich and R.~R.~Martin.
\newblock A note on short cycles in a hypercube.
Discrete Mathematics 306 (2006), no. 18, 2212--2218.

\bibitem{AMW}
M.~Axenovich, R.~R.~Martin, and C.~Winter.
\newblock On graphs embeddable in a layer of a hypercube and their extremal numbers.
Annals of Combinatorics 28 (2024), no. 4, 1257--1283.

\bibitem{baber}
R.~Baber.
\newblock Tur\'an densities of hypercubes.
arXiv:1201.3587 (2012).

\bibitem{BHLL}
J.~Balogh, P.~Hu, B.~Lidick\'y, and H.~Liu.
\newblock Upper bounds on the size of 4- and 6-cycle-free subgraphs of the hypercube.
European Journal of Combinatorics 35 (2014), 75--85.

\bibitem{chung}
F.~Chung.
\newblock Subgraphs of a hypercube containing no small even cycles. 
Journal of Graph Theory 16 (1992), no. 3, 273--286.

\bibitem{conder}
M.~Conder.
\newblock Hexagon-free subgraphs of hypercubes.
Journal of Graph Theory 17 (1993), no. 4, 477--479.

\bibitem{conlon}
D.~Conlon.
\newblock An extremal theorem in the hypercube.
Electronic Journal of Combinatorics 17 (2010), no. 1, R111.

\bibitem{EIL}
D.~Ellis, M.-R.~Ivan, and I.~Leader.
\newblock Tur\'an densities for daisies and hypercubes.
Bulletin of the London Mathematical Society 56 (2024), no. 12, 3838--3853.

\bibitem{EKR}
P.~Erd\H{o}s, C.~Ko, and R.~Rado.
\newblock Intersection theorems for systems of finite sets.
Quarterly Journal of Mathematics, Oxford Series (2), 12 (1961), 313--320.

\bibitem{FOe1}
Z.~F\"uredi and L.~\"Ozkahya.
\newblock On even-cycle-free subgraphs of the hypercube.
Electronic Notes in Discrete Mathematics 34 (2009), 515--517.

\bibitem{FOe2}
Z.~F\"uredi and L.~\"Ozkahya.
\newblock On even-cycle-free subgraphs of the hypercube.
Journal of Combinatorial Theory, Series A 118 (2011), 1816--1819.

\bibitem{GM1} 
A.~Grebennikov and J.~P.~Marciano.
\newblock $C_{10}$ has positive Tur\'an density in the hypercube.
Journal of Graph Theory 109 (2025), no. 1, 31--34.

\bibitem{GM2}
A.~Grebennikov and J.~P.~Marciano.
\newblock $C_{10}$ has positive Tur\'an density in the hypercube.
arXiv:2402.19409v4 (2024; revised 2026).

\end{thebibliography}
\end{document}